\documentclass[amssymb, 11pt]{amsart}
\usepackage{amsmath}
\usepackage{amsthm}
\usepackage{amsfonts}
\newcommand{\ee}{\mathbb{E}}
\newcommand{\pp}{\mathbb{P}}
\newtheorem{thm}{Theorem}[section]
\newtheorem{theorem}[thm]{Theorem}

\newtheorem{cor}[thm]{Corollary}

\newtheorem{lemma}[thm]{Lemma}



\usepackage{varioref}
\labelformat{section}{Section~#1}
\labelformat{figure}{Figure~#1}
\labelformat{table}{Table~#1}

\usepackage[numbers]{natbib}
\bibliographystyle{agsm}

\date{}

\setlength{\topmargin}{-0.2in}   
\setlength{\textheight}{8.5in}
\setlength{\textwidth}{5in}

\begin{document}

\title [Stein's method, heat kernel, and linear functions] {Stein's method, heat kernel, and linear functions on the orthogonal groups}

\author{Jason Fulman}
\address{Department of Mathematics\\
        University of Southern California\\
        Los Angeles, CA, 90089, USA}
\email{fulman@usc.edu}

\author{Adrian R\"{o}llin}
\address{Department of Statistics and Applied Probability\\
National University of Singapore\\
Singapore 117546}
\email{adrian.roellin@nus.edu.sg}

\keywords{Random matrix, Stein's method, heat kernel}

\date{Version of September 8, 2011}

\begin{abstract}
Combining Stein's method with heat kernel techniques, we study the function $Tr(AO)$, where $A$ is a fixed
$n \times n$ matrix over $\mathbb{R}$ such that $Tr(AA^t)=n$, and $O$ is from the Haar measure of the
orthogonal group $O(n,\mathbb{R})$. It is shown that the total variation distance
of the random variable $Tr(AO)$ to a standard normal random variable is bounded by $\frac{2 \sqrt{2}}{n-1}$,
slightly improving the constant in a bound of Meckes, which was obtained by completely different methods.
\end{abstract}

\maketitle

\section{Introduction} \label{intro}

Let $O(n,\mathbb{R})$ denote the group of $n \times n$ real orthogonal matrices, and let $O$ be from
the Haar measure on $O(n,\mathbb{R})$. Let $A$ be a fixed $n \times n$ real matrix, satisfying the
constraint $Tr(AA^t)=n$, and let $W=Tr(AO)$. Letting $\Phi(x) = \frac{1}{\sqrt{2 \pi}} \int_{-\infty}^x e^{-t^2/2} dt$
denote the cumulative distribution function of a standard normal random variable, a result of D'Aristotile, Diaconis and Newman
\cite{DDN} is that \[ sup_{Tr(AA^t)=n \atop -\infty<x<\infty} |\pp(W \leq x) - \Phi(x)| \rightarrow 0 \] as
$n \rightarrow \infty$. A recent paper of Meckes \cite{Me} shows that for all $n \geq 2$, the total
variation distance between the law of $W$ and a standard normal is at most $\frac{2 \sqrt{3}}{n-1}$.
In this paper we use a very different construction than that of Meckes and obtain a slightly better
total variation bound of $\frac{2 \sqrt{2}}{n-1}$.

As Meckes observes, this problem has quite a bit of history. Borel in \cite{B} showed that if $X$ is
a random vector on the $n-1$ dimensional unit sphere, with first coordinate $X_1$, then $\pp(\sqrt{n} X_1 \leq t)
\rightarrow \Phi(t)$ as $n \rightarrow \infty$. Since the first column of a Haar distributed orthogonal
matrix is uniformly distributed on the sphere, Borel's theorem follows from the central limit theorem for $W$,
taking $A= \sqrt{n} \oplus {\bf 0}$. Various generalizations of Borel's theorem, focusing on blocks of entries
of a Haar distributed orthogonal matrix, can be found in the papers \cite{DEL}, \cite{DF}, \cite{Ji}.

In the special case that $A=I$, $W$ becomes the trace of a Haar distributed orthogonal matrix.
Diaconis and Mallows (see \cite{D}) proved that $Tr(O)$ is approximately normal. Stein \cite{St2}
proved there exists a constant $C_r$ so that the total variation distance between $Tr(O)$ and a
standard normal is at most $C_r (n-1)^{-r}$, and Johansson \cite{J} proved a central limit in the
Kolmogorov metric with error term $O(e^{-cn})$ for some $c>0$. Diaconis and Shahshahani \cite{DS}
proved multivariate central limit theorems (without error terms) for the joint limiting distribution
of $Tr(O),Tr(O^2),\cdots,Tr(O^k)$, with $k$ fixed. Fulman \cite{F} used heat kernel methods to prove central limit
theorems with error terms for $Tr(O^k)$ with $k$ growing; this was extended to the multivariate setting by D\"{o}bler and Stolz
\cite{DoSt}.

Another reason for studying $Tr(AO)$ is the similarity with Hoeffding's combinatorial central limit theorem
\cite{H}, which proves a central limit theorem for $Tr(AP)$ where $P$ is a random permutation matrix. Stein's
method has been used to give explicit bounds for Hoeffding's theorem; see \cite{Bol} or \cite{CGS}.

It should be possible to extend the results in the current paper to a multivariate
setting. Indeed, Chatterjee and Meckes \cite{CM} prove bounds in the Wasserstein metric between the distribution
of $Tr(A_1 O),\cdots,Tr(A_k O)$ and a multivariate normal, where $A_1,\cdots,A_k$ are fixed and $O$ is from Haar
measure of the orthogonal group; see also Collins and Stolz \cite{CS} for an extension of \cite{CM} (without error terms) to
compact symmetric spaces.

Extensions of our results to the unitary and symplectic groups appear in the companion paper \cite{F2}, which slightly
improves Meckes' constant in the unitary case \cite{Me}. For the unitary groups there is also
an interesting recent paper \cite{KMS} which uses characteristic functions and a heavy dose of analysis to prove a central limit
theorem for $Tr(AU)$ with error term $O(n^{-2+b})$ where $0 \leq b <1$ depends on the leading order asymptotics
of the greatest singular value of $A$.

Although our results are only a slight improvement of those of Meckes \cite{Me}, the heat kernel is
a remarkable tool appearing in many parts of mathematics (see \cite{La} for a spirited defense of
this statement with many references), and we suspect that the blending of heat
kernel techniques with Stein's method will be useful for other problems.

The organization of this paper is as follows. \ref{zonal} gives
background on symmetric functions and the orthogonal group. \ref{heatsec}
gives background on the heat kernel and Laplacian of the orthogonal group.
\ref{O} uses tools from \ref{zonal} and \ref{heatsec} to prove our main
results.

\section{Symmetric functions and the orthogonal group} \label{zonal}

In this paper we use the zonal polynomial $Z_{\lambda}$ (with parameter 2) defined in Section 7.2 of Macdonald \cite{Mac}. To show the usefulness of symmetric functions, we give a quick proof that $W=Tr(AO)$ is asymptotically normal, if $O$ is from Haar measure of the orthogonal group and $A$ satisfies $AA^t=n$. As noted in Meckes \cite{Me} one can assume without loss of generality that $A$ is diagonal: let $A=UDV$ be the singular value decomposition of $A$. Then $W=Tr(UDVO)=Tr(DVOU)$, and the distribution of $VOU$ is the same as the distribution of $O$ by the translation invariance of Haar measure.

The key to proving a central limit theorem for $W$ is the following lemma from page 423 of \cite{Mac} (which should also prove useful for
carrying the results of \cite{KMS} over to the orthogonal case). For its statement, recall that the hooklength
of a box $x$ is $1$ + number of boxes in same row as $x$ to right of $x$ + number of boxes in same column of $x$ beneath $x$.
In the diagram below representing a partition of 7, each box is filled with its hook length: \[
\begin{array}{c c c c} \framebox{6}& \framebox{4}& \framebox{2}&
\framebox{1} \\ \framebox{3}& \framebox{1}&& \\ \framebox{1} &&&
\end{array}. \]

\begin{lemma} \label{mom1} Let $Z_{\lambda}$ be the zonal polynomial (with parameter 2) and let $h(2 \lambda)$ be the product of the
hooklengths of the partition $2 \lambda$, whose rows have length twice those of $\lambda$. Let $A$ have singular values $a_1,\cdots,a_n$. Then
\[ \int_{O(n,\mathbb{R})} exp(t \cdot Tr[AO]) dO = \sum_{\lambda} \frac{t^{|2 \lambda|}}{h(2 \lambda)}  \frac{Z_{\lambda}(a_1^2,\cdots,a_n^2)}
{Z_{\lambda}(1,\cdots,1)} .\]
\end{lemma}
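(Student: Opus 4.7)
My plan is to realize the lemma as a direct consequence of the Taylor expansion together with the zonal polynomial integration identities collected in Chapter VII of Macdonald \cite{Mac}. Throughout, the zonal polynomials are taken with parameter 2, which is exactly the normalization for which the hook-length product $h(2\lambda)$ appears as the coefficient denominator below.

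First I would reduce to the diagonal case. Writing the singular value decomposition $A = UDV$ with $D = \operatorname{diag}(a_1,\ldots,a_n)$, the trace identity $Tr(AO) = Tr(DVOU)$ and bi-invariance of Haar measure under $U,V \in O(n,\mathbb{R})$ imply that $Tr(AO)$ has the same distribution as $Tr(DO)$. So one may assume $A = D$, in which case $Tr(AO) = \sum_i a_i O_{ii}$.

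Next I would expand the exponential and integrate termwise:
\[
\int_{O(n,\mathbb{R})} e^{t\cdot Tr(AO)} dO = \sum_{k \geq 0} \frac{t^k}{k!} \ee[Tr(AO)^k].
\]
Because $O \mapsto -O$ preserves Haar measure on $O(n,\mathbb{R})$, all odd-order moments vanish, and the series reduces to even $k = 2m$. So the lemma is equivalent to the identity
\[
\ee[Tr(AO)^{2m}] = (2m)! \sum_{\lambda \vdash m} \frac{1}{h(2\lambda)} \cdot \frac{Z_\lambda(a_1^2,\ldots,a_n^2)}{Z_\lambda(1,\ldots,1)},
\]
since each $\lambda \vdash m$ contributes to the coefficient of $t^{2m} = t^{|2\lambda|}$.

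The remaining step is to establish this moment formula. This is precisely where the zonal polynomials enter as the zonal spherical functions of the Gelfand pair $(GL_n(\mathbb{R}), O(n,\mathbb{R}))$. Two ingredients combine to give it: (i) the expansion of the power-sum symmetric function $p_1^{2m}$ in the zonal basis, whose coefficients are exactly $(2m)!/h(2\lambda)$ in Macdonald's parameter-2 normalization; and (ii) the integration identity that converts $Z_\lambda$ evaluated at an object built from $O$ and $A$ into $Z_\lambda(AA^t) = Z_\lambda(a_1^2,\ldots,a_n^2)$ up to the ratio $1/Z_\lambda(I_n)$. The main obstacle I anticipate is not the strategy itself, which follows Macdonald's template, but rather bookkeeping: different sources (James, Muirhead, Macdonald) use incompatible normalizations of $Z_\lambda$, so one must carefully verify that the constants ultimately organize themselves into the clean form $1/h(2\lambda)$ that appears in the statement. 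Once this is done the identity is exactly the formula appearing on page~423 of \cite{Mac}.
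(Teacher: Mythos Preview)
The paper does not prove this lemma at all: it is simply quoted as ``the following lemma from page 423 of \cite{Mac}'' and then used. Your proposal therefore goes beyond what the paper does, supplying a sketch of the derivation that Macdonald gives. The outline is sound---reduction to diagonal $A$ by singular value decomposition and bi-invariance of Haar measure, vanishing of odd moments via $O\mapsto -O$, expansion of $p_1^{2m}$ in the zonal basis with coefficients $(2m)!/h(2\lambda)$, and the spherical-function integration formula for the pair $(GL_n(\mathbb{R}),O(n,\mathbb{R}))$---and you correctly flag the normalization bookkeeping as the only delicate point. Since the paper's ``proof'' is a bare citation, there is no substantive difference in approach to report.
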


Lemma \ref{mom1} immediately implies that $\ee(W)=0$. Since by page 410 of \cite{Mac} one has that $Z_1(a_1^2,\cdots,a_n^2)=a_1^2+\cdots+a_n^2=n$, it follows that $Var(W)=1$. Lemma \ref{mom1} also implies the following central limit theorem for $W$.

\begin{cor} Let $A$ satisfy $Tr(AA^t)=n$, and let $O$ be from the Haar
measure of the orthogonal group $O(n,\mathbb{R})$. Then as $n \rightarrow \infty$, $W=Tr(AO)$ tends
to the standard normal distribution with mean 0 and variance 1. \end{cor}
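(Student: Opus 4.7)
The plan is to apply the method of moments to $W$, reading its moments off the moment generating function of Lemma~\ref{mom1}. Since only even powers $t^{2|\lambda|}$ appear on the right-hand side, the odd moments of $W$ vanish, and extracting the coefficient of $t^{2k}$ gives
$$\mathbb{E}(W^{2k}) = (2k)! \sum_{\lambda \vdash k} \frac{1}{h(2\lambda)} \cdot \frac{Z_\lambda(a_1^2,\ldots,a_n^2)}{Z_\lambda(1,\ldots,1)}.$$
The goal is to show this quantity converges to the Gaussian moment $(2k-1)!! = (2k)!/(2^k k!)$ as $n \to \infty$.

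I would first pin down the required combinatorial constant by specializing Lemma~\ref{mom1} to $A = I$ (where the ratio of zonal polynomials is identically $1$) and invoking the classical fact that $\mathbb{E}(Tr(O)^{2k}) = (2k-1)!!$ for $n$ sufficiently large; this yields the identity $\sum_{\lambda \vdash k} 1/h(2\lambda) = 1/(2^k k!)$. It then suffices to show that $Z_\lambda(a^2)/Z_\lambda(1^n) \to 1$ for each fixed $\lambda \vdash k$. Expanding $Z_\lambda$ in the power-sum basis as $\sum_{\mu \vdash k} c_{\lambda\mu} p_\mu$, the partition $\mu = (1^k)$ contributes $c_{\lambda,(1^k)} n^k$ to both numerator and denominator (since $p_{(1^k)}(a^2) = (\sum_i a_i^2)^k = n^k$ by the normalization $Tr(AA^t) = n$, and $p_{(1^k)}(1^n) = n^k$), while every other $\mu$ contributes a denominator term of order $n^{\ell(\mu)} \leq n^{k-1}$, strictly lower.

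The convergence of moments, together with Carleman's criterion (the normal distribution is determined by its moments), then yields the stated convergence in distribution.

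The main obstacle is the uniformity of the limit $Z_\lambda(a^2)/Z_\lambda(1^n) \to 1$: the numerator power sums $p_\mu(a^2) = \prod_i p_{\mu_i}(a^2)$ for $\mu \neq (1^k)$ are \emph{not} automatically of lower order than $n^k$ in the ``spiked'' regime where some $a_i^2$ is of order $n$ (for example $A = \sqrt{n} \oplus \mathbf{0}$, where $p_j(a^2) = n^j$ for every $j$). Such extreme cases reduce to Borel's theorem (as discussed in Section~\ref{intro}), but a fully uniform termwise argument is not readily available; one must either treat spiked regimes separately or exploit the structure of the full $\lambda$-sum rather than passing to the limit term by term.
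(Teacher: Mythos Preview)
Your setup is the same as the paper's---method of moments via Lemma~\ref{mom1}, odd moments zero, extract the $2k$-th moment as a sum over $\lambda\vdash k$---and you correctly identify the genuine obstacle: the termwise limit $Z_\lambda(a^2)/Z_\lambda(1^n)\to 1$ fails in the spiked regime, so your argument as written is incomplete. Your own diagnosis (``exploit the structure of the full $\lambda$-sum rather than passing to the limit term by term'') is exactly what the paper does, and the missing ingredient is a specific identity.

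The paper proceeds as follows. First, it replaces only the \emph{denominator} by its asymptotic value, using the product formula $Z_\lambda(1,\ldots,1)=\prod_{(i,j)\in\lambda}(n-i+2j-1)=n^{|\lambda|}(1+O(1/n))$ from Macdonald; since there are finitely many $\lambda\vdash k$ and the $Z_\lambda(a^2)$ are nonnegative, this gives
\[
\ee(W^{2k}) \;=\; \frac{(2k)!}{n^{k}}\sum_{|\lambda|=k}\frac{Z_\lambda(a_1^2,\ldots,a_n^2)}{h(2\lambda)}\;\bigl(1+O(1/n)\bigr).
\]
Second, it evaluates the remaining numerator sum \emph{exactly}, using the identity (Macdonald, p.~406)
\[
\sum_{|\lambda|=k}\frac{2^{k}k!\,Z_\lambda(x_1,\ldots,x_n)}{h(2\lambda)} \;=\; (x_1+\cdots+x_n)^{k},
\]
which at $x_i=a_i^2$ equals $n^{k}$ by the hypothesis $Tr(AA^t)=n$. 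The $n^{k}$ cancels and one obtains $\ee(W^{2k})\to (2k)!/(2^{k}k!)=(2k-1)!!$, uniformly in $A$---the spiked case included, since the sum depends on the $a_i^2$ only through $p_1(a^2)=n$. Your roundabout determination of $\sum_{\lambda\vdash k}1/h(2\lambda)$ via the $A=I$ case is then unnecessary: it is a specialization of the same identity.
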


\begin{proof} Lemma \ref{mom1} implies that $\ee(W^r)=0$ for $r$ odd, so suppose that $r$ is even. From page 409 of \cite{Mac}, \[ Z_{\lambda}(1,\cdots,1) = \prod_{(i,j) \in \lambda} (n-i+2j-1). \] With $\lambda$ fixed this is asymptotic to $n^{|\lambda|} (1+O(1/n))$.

Recall we can assume that $A$ is diagonal with entries $(a_1,\cdots,a_n)$. Hence by Lemma \ref{mom1}, $\ee[Tr(AO)^r]$, with $A$ fixed, $r$ fixed and even, is asymptotic to
\begin{eqnarray*}
& & \frac{r!}{n^{r/2}} \sum_{|\lambda|=r/2} \frac{Z_{\lambda}(a_1^2,\cdots,a_n^2)}{h(2 \lambda)} \\
& = & \frac{r!}{n^{r/2} 2^{r/2} (r/2)!} \sum_{|\lambda|=r/2} \frac{2^{r/2} (r/2)! Z_{\lambda}(a_1^2,\cdots,a_n^2)}{h(2 \lambda)} \\
& = & \frac{r!}{n^{r/2} 2^{r/2} (r/2)!} (a_1^2+\cdots+a_n^2)^{r/2} \\
& = & \frac{r!}{2^{r/2} (r/2)!}. \end{eqnarray*} The penultimate equality was from page 406 of \cite{Mac}.

It follows that for fixed $r$, $E(W^r)$ is $0$ for $r$ odd, and is asymptotic to $(r-1) \cdots (3)(1)$ for $r$ even. The method of moments (\cite{D}) implies that $W$
is asymptotically normal with mean 0 and variance 1. \end{proof}

It will also be useful to work with Schur functions $s_{\lambda}(AO)$ evaluated on the eigenvalues of $AO$. An in-depth treatment
of Schur functions is in Chapter 1 of \cite{Mac}. From pages 421-422 of \cite{Mac}, one can express the integral of a Schur function
over the orthogonal group in terms of zonal polynomials as follows:

\begin{lemma} \label{rains1} Let $s_{\lambda}$ be the Schur function and $Z_{\lambda}$ be the zonal polynomial
with parameter 2. Then for any partition $\lambda$ of length $\leq n$,
\[ \int_{O(n,\mathbb{R})} s_{\lambda}(AO) dO = \begin{array}{ll}
\frac{Z_{\kappa}(a_1^2,\cdots,a_n^2)}
{Z_{\kappa}(1,\cdots,1)} & \mbox{if \ $\lambda=2 \kappa$}\\
0 & \mbox{otherwise}
\end{array} \] \end{lemma}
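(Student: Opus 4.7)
The plan is to give a representation-theoretic derivation, exploiting the fact that Schur polynomials are $GL_n$-characters together with Littlewood's branching rule for $GL_n \downarrow O_n$. First, writing $\rho_\lambda$ for the irreducible polynomial representation of $GL_n(\mathbb{C})$ with highest weight $\lambda$ on the module $V_\lambda$, I would use the identity $s_\lambda(AO) = \mathrm{tr}_{V_\lambda}\bigl(\rho_\lambda(A)\rho_\lambda(O)\bigr)$ (Weyl's character formula) to rewrite the integral as
\[ \int_{O(n,\mathbb{R})} s_\lambda(AO)\,dO \;=\; \mathrm{tr}_{V_\lambda}\bigl(\rho_\lambda(A)\,\Pi_\lambda\bigr), \]
where $\Pi_\lambda := \int_{O(n,\mathbb{R})} \rho_\lambda(O)\,dO$ is the orthogonal projection of $V_\lambda$ onto the subspace $V_\lambda^{O(n)}$ of $O(n)$-invariant vectors (idempotency and self-adjointness of $\Pi_\lambda$ follow from left-invariance of Haar measure and compactness of $O(n)$).

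Next I would invoke Littlewood's branching rule for $GL_n \downarrow O_n$: the multiplicity of the trivial $O(n)$-representation inside $V_\lambda|_{O(n)}$ equals $\sum_\delta c^\lambda_{2\delta,\emptyset}$, where $c$ denotes a Littlewood--Richardson coefficient. Since $c^\lambda_{2\delta,\emptyset} = \delta_{\lambda,2\delta}$, this sum collapses to $1$ precisely when $\lambda$ has all parts even (so $\lambda = 2\kappa$ for a unique $\kappa$) and to $0$ otherwise. This immediately yields the ``otherwise'' case of the lemma.

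When $\lambda = 2\kappa$, the invariant subspace $V_{2\kappa}^{O(n)}$ is one-dimensional; picking a unit spanning vector $v_0$, one has $\Pi_{2\kappa} = v_0 v_0^*$, and the integral reduces to the matrix coefficient $\langle \rho_{2\kappa}(A) v_0, v_0\rangle$. Because $v_0$ is $O(n)$-fixed, this matrix coefficient depends on $A$ only through the singular values $a_1,\dots,a_n$ (equivalently through the eigenvalues $a_1^2,\dots,a_n^2$ of $AA^t$), and is therefore a zonal spherical function on the symmetric space $GL_n(\mathbb{R})/O(n,\mathbb{R})$. The final step is to identify this spherical function with $Z_\kappa(a_1^2,\dots,a_n^2)/Z_\kappa(1,\dots,1)$, appealing to the construction of the zonal polynomial at Jack parameter $\alpha = 2$ in Chapter VII of \cite{Mac}.

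The principal obstacle is this final identification. Macdonald defines $Z_\kappa$ by an analytic-combinatorial characterization (as an eigenfunction of a specific radial differential operator together with a prescribed normalization of its leading monomial), while the matrix coefficient $\langle \rho_{2\kappa}(A) v_0, v_0\rangle$ arises directly from representation theory. Reconciling the two requires either invoking the James--Constantine integral representation of $Z_\kappa$, or directly verifying that the matrix coefficient satisfies Macdonald's defining eigenfunction relation with the correct eigenvalue (coming from the quadratic Casimir acting on $V_{2\kappa}$) and then pinning down the overall scaling by evaluating at $A = I$, which forces the normalizing denominator $Z_\kappa(1,\dots,1)$.
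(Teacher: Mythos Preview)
The paper does not prove this lemma; it is quoted directly from pages 421--422 of Macdonald \cite{Mac}. Your outline is essentially the argument underlying that reference: realize the integral as the zonal spherical function for the Gelfand pair $(GL_n(\mathbb{R}), O(n,\mathbb{R}))$ attached to the spherical representation $V_{2\kappa}$, and then identify that spherical function with the normalized zonal polynomial. The ``obstacle'' you flag at the end---matching the representation-theoretic matrix coefficient $\langle \rho_{2\kappa}(A)v_0,v_0\rangle$ with Macdonald's combinatorial/analytic definition of $Z_\kappa$---is exactly the content of Chapter~VII of \cite{Mac}, so invoking that source for the identification is appropriate and is in effect what the paper does for the entire lemma. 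One small correction: the identity $s_\lambda(AO) = \mathrm{tr}_{V_\lambda}\bigl(\rho_\lambda(A)\rho_\lambda(O)\bigr)$ is not Weyl's character formula; it is simply the statement that $s_\lambda$ is the character of the polynomial representation $\rho_\lambda$, combined with multiplicativity of $\rho_\lambda$.
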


The power sum symmetric functions $p_{\lambda}$ will also be useful. To define these, given a matrix $M$, if $\lambda$ is an integer partition
and $m_j$ denotes the multiplicity of part $j$ in $\lambda$, we set $p_{\lambda}(M)= \prod_j Tr(M^j)^{m_j}$.
For example, $p_{5,3,3}(M)=Tr(M^5)Tr(M^3)^2$. Sometimes we suppress the $M$ and use the notation $p_{\lambda}$. Lemma \ref{express}, from page
114 of \cite{Mac}, expresses power sum symmetric functions in terms of Schur functions $s_{\lambda}$, and will be used later in the paper.

\begin{lemma} \label{express} Let $\chi^{\lambda}_{\rho}$ denote the value of the irreducible character of the symmetric group parameterized by
$\lambda$ on the conjugacy class of elements of type $\rho$. Then \[ p_{\rho} = \sum_{\lambda} \chi^{\lambda}_{\rho} s_{\lambda} .\]
\end{lemma}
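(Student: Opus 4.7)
The plan is to derive the identity by inverting the Frobenius character formula using the Hall inner product on the ring of symmetric functions. First, I would recall two standard ingredients. Under the Hall inner product $\langle\cdot,\cdot\rangle$, the Schur functions form an orthonormal basis, $\langle s_\lambda,s_\mu\rangle=\delta_{\lambda\mu}$, while the power sums satisfy $\langle p_\rho,p_\sigma\rangle=z_\rho\delta_{\rho\sigma}$, where $z_\rho$ is the order of the centralizer in the symmetric group of a permutation of cycle type $\rho$.

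Next, I would invoke the Frobenius formula
\[ s_\lambda=\sum_\rho z_\rho^{-1}\chi^\lambda_\rho\, p_\rho, \]
which can be established via the characteristic map from class functions on $S_n$ (with its natural inner product) to symmetric functions of degree $n$; this map is an isometry that sends the irreducible character $\chi^\lambda$ to $s_\lambda$. Since Lemma~\ref{express} is quoted directly from page 114 of Macdonald, I would cite this step rather than reproving it.

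Given these inputs, the proof is a one-line calculation. Writing the desired Schur expansion as $p_\rho=\sum_\lambda c_\lambda s_\lambda$ and pairing with $s_\lambda$ yields
\[ c_\lambda=\langle p_\rho,s_\lambda\rangle=\Big\langle p_\rho,\sum_\sigma z_\sigma^{-1}\chi^\lambda_\sigma p_\sigma\Big\rangle=\sum_\sigma z_\sigma^{-1}\chi^\lambda_\sigma\langle p_\rho,p_\sigma\rangle=\chi^\lambda_\rho, \]
which is the claim. Alternatively, one can substitute the Frobenius formula into the right-hand side of the lemma and use the column orthogonality relation $\sum_\lambda\chi^\lambda_\rho\chi^\lambda_\sigma=z_\rho\delta_{\rho\sigma}$ to collapse the double sum directly to $p_\rho$.

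The only nontrivial content is thus the Frobenius formula itself, or equivalently the construction of the characteristic map; the inversion step presented here is routine. Because the lemma is stated as a reference to Macdonald, the expected proof is merely this short inner-product manipulation, with the deeper character-theoretic input quoted.
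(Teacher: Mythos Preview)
Your argument is correct: orthonormality of the Schur functions together with the Frobenius expansion of $s_\lambda$ in the power-sum basis immediately yields $\langle p_\rho,s_\lambda\rangle=\chi^\lambda_\rho$, which is the desired coefficient. The paper itself does not supply a proof of this lemma at all; it simply quotes the identity from page~114 of Macdonald~\cite{Mac} and uses it as a black box. So there is no ``paper's own proof'' to compare against, and your proposal in fact goes further than what the authors do by sketching why the cited identity holds.
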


\section{Heat kernel and Laplacian of the orthogonal group} \label{heatsec}

Recall that a pair $(W,W')$ of random variables is called exchangeable if $(W,W')$ has the same
distribution as $(W',W)$. To construct an exchangeable pair to be used in our applications, we use the heat kernel of $G$.
See \cite{G}, \cite{Ro} for a detailed discussion of heat kernels on compact Lie groups. The papers
\cite{L},\cite{Liu}, \cite{R} illustrate combinatorial uses of heat kernels on compact Lie groups,
and \cite{Liu} also discusses the use of the heat kernel for finite groups.

The heat kernel on $G$ is defined by setting for $x,y \in G$ and $t \geq 0$, \begin{equation} \label{heat} K(t,x,y) =
\sum_{n \geq 0} e^{-\lambda_n t} \phi_n(x) \overline{\phi_n(y)}, \end{equation} where the $\lambda_n$ are the eigenvalues
of the Laplacian repeated according to multiplicity, and the $\phi_n$ are an orthonormal basis of eigenfunctions of $L^2(G)$; these can be
taken to be the irreducible characters of $G$.

We use the following properties of the heat kernel. Here $\Delta$ denotes the Laplacian of $G$, and $e^{t \Delta}$ is defined as
$I+ t \Delta+ t^2 \frac{\Delta^2}{2!} + \cdots$. Part 2 of Lemma \ref{spectral} is immediate from the expansion \eqref{heat}, and
parts 1 and 3 of Lemma \ref{spectral} are on page 198 of \cite{G}.

\begin{lemma} \label{spectral} Let $G$ be a compact Lie group, $x,y \in G$, and $t \geq 0$.
\begin{enumerate}
\item $K(t,x,y)$ converges and is non-negative for all $x,y,t$.
\item $\int_{y \in G} K(t,x,y) dy = 1$, where the integration is with respect to Haar measure of $G$.
\item $e^{t \Delta} \phi(x) = \int_{y \in G} K(t,x,y) \phi(y) dy$ for smooth $\phi$.
\end{enumerate}
\end{lemma}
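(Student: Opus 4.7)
The plan is to handle the three parts in the order suggested by the authors: part 2 first, since it is a direct consequence of the spectral expansion (\ref{heat}); then part 3, by a similar spectral manipulation; and finally part 1, where I would simply cite Grigoryan \cite{G} as the authors do.

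For part 2, the strategy is to exploit the fact that, since Haar measure on a compact Lie group is a probability measure, the constant function $\phi_0 \equiv 1$ is already $L^2$-normalized and is an eigenfunction of $\Delta$ with eigenvalue $\lambda_0 = 0$. I take $\phi_0$ as the first element of the orthonormal basis of eigenfunctions. Integrating (\ref{heat}) term by term in $y$ against Haar measure, orthonormality gives $\int_G \overline{\phi_n(y)}\,dy = \langle \phi_n, \phi_0\rangle = \delta_{n,0}$, so only the $n=0$ term survives and the integral equals $e^{0} \phi_0(x) \cdot 1 = 1$. The term-by-term integration is legitimate provided one already knows convergence of the series, which is part 1; alternatively one can justify it directly by dominated convergence using bounds on the $\phi_n$ and the rapid decay of $e^{-\lambda_n t}$.

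For part 3, the argument is again a spectral computation. For smooth $\phi$, expand $\phi = \sum_n c_n \phi_n$ with $c_n = \int_G \phi(y)\overline{\phi_n(y)}\,dy$; smoothness makes the coefficients $c_n$ decay faster than any polynomial in $\lambda_n$, so all manipulations below converge absolutely. Each $\phi_n$ satisfies $\Delta\phi_n = -\lambda_n \phi_n$, hence $e^{t\Delta}\phi_n = e^{-\lambda_n t}\phi_n$, and by linearity $e^{t\Delta}\phi(x) = \sum_n c_n e^{-\lambda_n t}\phi_n(x)$. On the other hand, interchanging sum and integral in $\int_G K(t,x,y)\phi(y)\,dy$ yields $\sum_n e^{-\lambda_n t}\phi_n(x)\int_G \overline{\phi_n(y)}\phi(y)\,dy = \sum_n c_n e^{-\lambda_n t}\phi_n(x)$, matching the previous expression.

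Part 1 is the main obstacle and is the reason the lemma is attributed to Grigoryan rather than proved in-house. Convergence of the series in (\ref{heat}) for $t>0$ can be obtained from Weyl-type eigenvalue growth together with bounds on sup norms of the $\phi_n$ (each character is bounded by its degree, and the degrees grow only polynomially in $\sqrt{\lambda_n}$). Non-negativity, however, is genuinely delicate: the standard route is to identify $K(t,x,y)$ with the transition density of Brownian motion on $G$, which is manifestly non-negative, or alternatively to invoke the minimum principle for solutions of the heat equation $\partial_t u = \Delta u$ on the compact manifold $G$. Neither argument is short, so for a note of this length I would not reprove it and would simply refer the reader to page 198 of \cite{G}.
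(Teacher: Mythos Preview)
Your proposal is correct and matches the paper's treatment: the paper does not give an in-house proof at all, but simply remarks that part 2 is immediate from the spectral expansion \eqref{heat} and that parts 1 and 3 are on page 198 of \cite{G}. Your write-up merely unpacks what ``immediate from \eqref{heat}'' means for part 2 and supplies the routine spectral computation for part 3 rather than citing it; the only substantive input, non-negativity in part 1, you also defer to Grigor'yan, exactly as the paper does.
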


The symmetry in $x$ and $y$ of $K(t,x,y)$ shows that the heat kernel is a reversible Markov
process with respect to the Haar measure of $G$. It is a standard fact \cite{RR}, \cite{Stn} that
reversible Markov processes lead to exchangeable pairs $(W,W')$. Namely suppose one has a Markov chain with
transition probabilities $K(x,y)$ on a state space $X$, and that the Markov chain is reversible with respect to a
probability distribution $\pi$ on $X$. Then given a function $f$ on $X$, if one lets $W=f(x)$ where $x$ is chosen
from $\pi$ and $W'=f(x')$ where $x'$ is obtained by moving from $x$ according to $K(x,y)$, then $(W,W')$
is an exchangeable pair. In the special case of the heat kernel on a compact Lie group $G$, given a function $f$ on $G$,
one can construct an exchangeable pair $(W,W')$ by letting $W=f(O)$ where $O$ is chosen from Haar measure, and $W'=f(O')$,
where $O'$ is obtained by moving time $t$ from $O$ via the heat kernel. To define the exchangeable pair
$(W,W')$ used in this paper, we further specialize by setting $f(O)=Tr(AO)$.

To analyze the heat kernel on the orthogonal groups, we need to understanding the corresponding Laplacian.
Proposition 2.7 of the paper \cite{L} gave an explicit description of the Laplacian for $SO(n,\mathbb{R})$.
The same calculations work for $O(n,\mathbb{R})$ (which shares the same Lie algebra with $SO(n,\mathbb{R})$),
and yield the following result.

\begin{lemma} \label{Oform} Let $A$ satisfy $Tr(AA^t)=n$. Then
\begin{enumerate}
\item \[ \Delta_{O(n)} p_1(AO) = - \frac{(n-1)}{2} p_1(AO).\]
\item \[ \Delta_{O(n)} p_{1,1}(AO) = -(n-1) p_{1,1}(AO) - p_2(AO) + n .\]
\end{enumerate}
\end{lemma}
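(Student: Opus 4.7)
The plan is to use the explicit representation of the Laplacian as a sum of squared left-invariant vector fields. Since $O(n,\mathbb{R})$ and $SO(n,\mathbb{R})$ share the Lie algebra $\mathfrak{so}(n,\mathbb{R})$ of antisymmetric matrices, and Proposition 2.7 of \cite{L} describes $\Delta_{SO(n)}$ in these terms, the same formula applies to $\Delta_{O(n)}$. I would fix the orthonormal basis $X_{ij} = (E_{ij}-E_{ji})/\sqrt{2}$ for $1\leq i<j\leq n$, where $E_{ij}$ denotes the matrix unit with $1$ in position $(i,j)$, and interpret each $X_{ij}$ as a left-invariant vector field via $(X_{ij}f)(O) = \frac{d}{dt}\bigl|_{t=0}f(Oe^{tX_{ij}})$, giving $\Delta_{O(n)} = \sum_{i<j}X_{ij}^2$.

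For part (1), with $f(O) = Tr(AO)$, two derivatives yield $(X_{ij}^2 f)(O) = Tr(AOX_{ij}^2)$, hence
\[
\Delta_{O(n)} p_1(AO) = Tr\Bigl(AO \sum_{i<j} X_{ij}^2\Bigr).
\]
Using $E_{ab}E_{cd} = \delta_{bc}E_{ad}$, one quickly finds $X_{ij}^2 = -\tfrac12(E_{ii}+E_{jj})$; summing over pairs $i<j$ (each $E_{kk}$ appears $n-1$ times) gives $\sum_{i<j}X_{ij}^2 = -\tfrac{n-1}{2}I$, and part (1) follows at once.

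For part (2), I would apply the product-rule identity $\Delta(f^2) = 2f\,\Delta f + 2\sum_{i<j}(X_{ij}f)^2$, which holds for any Laplacian of the form $\sum X_\alpha^2$. Part (1) handles the first term, contributing $-(n-1)p_{1,1}(AO)$. For the second, set $B=AO$ and note $(X_{ij}f)(O) = Tr(BX_{ij}) = \tfrac{1}{\sqrt{2}}(B_{ji}-B_{ij})$. Squaring and summing,
\[
2\sum_{i<j}(X_{ij}f)^2 = \sum_{i<j}(B_{ji}-B_{ij})^2 = \sum_{i\neq j}B_{ij}^2 - \sum_{i\neq j}B_{ij}B_{ji} = Tr(BB^t) - Tr(B^2),
\]
where the diagonal pieces $\sum_i B_{ii}^2$ cancel between the two parts. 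Finally $Tr(BB^t)=Tr(AA^t)=n$ by orthogonality of $O$, and $Tr(B^2)=p_2(AO)$, which yields part (2).

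The main obstacle is nothing deep: it is simply keeping the normalization of the inner product on $\mathfrak{so}(n)$ (and hence the scaling of the basis $X_{ij}$) consistent with \cite{L}, and carefully converting between sums restricted to $i<j$ and sums over all pairs $(i,j)$ so that the off-diagonal contributions collapse cleanly into the two traces above.
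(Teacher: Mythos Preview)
Your proposal is correct and follows essentially the same route as the paper: realize $\Delta_{O(n)}$ as $\sum_{i<j} \mathcal{L}_{X_{ij}}^2$ for an orthonormal basis of $\mathfrak{so}(n)$ (exactly as in \cite{L}, since $O(n)$ and $SO(n)$ share this Lie algebra), and differentiate $Tr(AO)$ and $Tr(AO)^2$ directly. The only cosmetic differences are that the paper's detailed calculation works with the rescaled inner product $\langle X,Y\rangle=\frac{n}{2}Tr(X^tY)$ and the tensor Casimir identity rather than your product-rule shortcut $\Delta(f^2)=2f\Delta f+2\sum(X_{ij}f)^2$; your normalization $\langle X,Y\rangle=Tr(X^tY)$ lands on the constants $-\tfrac{n-1}{2}$ and $-(n-1)$ of the lemma without any rescaling, which is the cleaner choice here.
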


\section{Main results} \label{O}

To begin we describe the exchangeable pair $(W,W')$. Namely $W=Tr(AO)=p_1(AO)$, where as explained earlier
one can assume $A$ is diagonal. We fix $t>0$, and motivated by \ref{heatsec}, define \[ W' = e^{t \Delta}(W) = W + \sum_{k \geq 1} \frac{t^k}{k!} \Delta^k(W).\]

Lemma \ref{Ocond1} computes the conditional expectation $\ee[W'|O]$.

\begin{lemma} \label{Ocond1} \[ \ee[W'|O] = \left( 1 - \frac{t(n-1)}{2} \right) W + O(t^2). \]
\end{lemma}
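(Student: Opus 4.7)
The plan is to exploit the fact that $W = p_1(AO)$ is an eigenfunction of the Laplacian $\Delta_{O(n)}$, which makes $\ee[W' \mid O]$ computable in closed form. First I would justify the expansion in the statement by identifying $\ee[W' \mid O]$ with the heat semigroup applied to $W$: the exchangeable-pair construction from \ref{heatsec} takes $W' = Tr(AO')$ with $O'$ obtained from $O$ by running the heat kernel for time $t$, so part 3 of Lemma \ref{spectral} gives
\[ \ee[W' \mid O] = \int_{O'} K(t, O, O') W(O') \, dO' = e^{t \Delta}(W)(O) = W + \sum_{k \geq 1} \frac{t^k}{k!} \Delta^k W. \]

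The main computation then reduces to evaluating $\Delta^k W$. Here Lemma \ref{Oform}(1) does essentially all the work: it says $\Delta W = -\tfrac{n-1}{2} W$, so $W$ is an eigenfunction and a trivial induction gives $\Delta^k W = \bigl( -\tfrac{n-1}{2} \bigr)^k W$ for every $k \geq 0$. Substituting termwise and recognizing the scalar series,
\[ \ee[W' \mid O] = \sum_{k \geq 0} \frac{t^k}{k!} \bigl( -\tfrac{n-1}{2} \bigr)^k W = e^{-t(n-1)/2} W. \]
Taylor-expanding the scalar exponential at $t = 0$ produces $\bigl( 1 - \tfrac{t(n-1)}{2} \bigr) W + O(t^2)$, which is the claimed identity; the $O(t^2)$ remainder is uniform in $O$ since $|W|$ is bounded by $\sqrt{n}$ times the largest singular value of $A$.

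I do not anticipate a real obstacle: the genuine content has already been packaged into Lemma \ref{Oform}(1), and the eigenfunction property collapses the exponential of operators into a scalar exponential. The one thing worth flagging is that the argument actually yields more than the statement claims, namely the exact formula $\ee[W' \mid O] = e^{-t(n-1)/2} W$; presumably only the first-order expansion is recorded because that is all that feeds into the subsequent Stein's-method estimates, where $t$ will eventually be sent to zero.
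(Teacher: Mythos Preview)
Your proof is correct and follows essentially the same approach as the paper: both invoke Lemma~\ref{spectral}(3) to identify $\ee[W'\mid O]$ with $e^{t\Delta}W$ and then apply the eigenvalue relation from Lemma~\ref{Oform}(1). The only difference is that the paper truncates the operator series at first order before substituting the eigenvalue, whereas you sum the full series to obtain the exact identity $\ee[W'\mid O]=e^{-t(n-1)/2}W$ and then Taylor-expand; your version is slightly sharper but not a genuinely different route.
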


\begin{proof} Applying part 3 of Lemma \ref{spectral} and part 1 of Lemma \ref{Oform},
\begin{eqnarray*} \ee[W'|O] & = & e^{t \Delta}(W)\\
& = &  W + t \Delta W + O(t^2) \\
& = & W + t \frac{-(n-1)}{2} W + O(t^2),
\end{eqnarray*} as desired. \end{proof}

Lemma \ref{Ocond2} computes $\ee[(W'-W)^2|O]$.

\begin{lemma} \label{Ocond2}
\[ \ee[(W'-W)^2|O] = t [ n - p_2(AO)] + O(t^2) .\]
\end{lemma}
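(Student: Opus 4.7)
The plan is to expand $(W'-W)^2 = (W')^2 - 2WW' + W^2$ and take conditional expectation given $O$. Because $W = p_1(AO)$ is a deterministic function of $O$, it can be pulled out of the conditional expectation:
\[
\ee[(W'-W)^2 \mid O] = \ee[(W')^2 \mid O] - 2 W \,\ee[W' \mid O] + W^2.
\]
The heat-kernel Markov property (Lemma \ref{spectral}(3)) says that for any smooth $f$ on $G$, $\ee[f(O') \mid O] = e^{t\Delta} f(O)$. Applying this once with $f = p_1(A\,\cdot)$ and once with $f = p_1(A\,\cdot)^2 = p_{1,1}(A\,\cdot)$ reduces the problem to computing $e^{t\Delta} p_1(AO)$ and $e^{t\Delta} p_{1,1}(AO)$.

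Next I would expand $e^{t\Delta} = I + t\Delta + O(t^2)$ in both terms. The zeroth-order contribution is
\[
p_{1,1}(AO) - 2 p_1(AO) \cdot p_1(AO) + p_1(AO)^2,
\]
which vanishes identically because $p_{1,1}(AO) = p_1(AO)^2$ by definition of the power-sum polynomials. The coefficient of $t$ is $\Delta p_{1,1}(AO) - 2 p_1(AO)\, \Delta p_1(AO)$, and here I simply invoke Lemma \ref{Oform}: substituting the two explicit formulas gives
\[
-(n-1) p_{1,1}(AO) - p_2(AO) + n + (n-1) p_1(AO)^2,
\]
which collapses to $n - p_2(AO)$ on one more use of $p_{1,1}(AO) = p_1(AO)^2$. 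This is precisely the stated conclusion.

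The computation is essentially a first-order Taylor expansion of the heat semigroup combined with the explicit Laplacian formulas. The only conceptual point worth flagging is that $\ee[(W')^2 \mid O]$ must be obtained by applying $e^{t\Delta}$ to the squared observable $p_{1,1}$, and not by squaring $\ee[W'\mid O]$; the two differ at order $t$ and that difference is exactly what produces the $-p_2(AO)$ term. All of the real content has already been packaged into Lemma \ref{Oform}, so there is no substantive obstacle beyond bookkeeping.
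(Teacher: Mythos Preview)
Your proof is correct and follows essentially the same route as the paper: expand $(W'-W)^2$, use $\ee[f(O')\mid O]=e^{t\Delta}f(O)$ for $f=p_1$ and $f=p_{1,1}$, Taylor-expand to first order in $t$, and invoke the two formulas of Lemma~\ref{Oform}. The only cosmetic difference is that the paper cites Lemma~\ref{Ocond1} for the $-2W\,\ee[W'\mid O]$ term rather than re-expanding $e^{t\Delta}p_1$ directly as you do.
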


\begin{proof} Clearly
\[ \ee[(W'-W)^2|O] = \ee[(W')^2|O] - 2W \ee[W'|O] + W^2.\]
By part 3 of Lemma \ref{spectral} and part 2 of Lemma \ref{Oform},
\begin{eqnarray*}
\ee[(W')^2|O] & = & W^2 + t \Delta p_{1,1}(AO) + O(t^2) \\
& = & W^2 + t \left[-(n-1) p_{1,1}(AO) - p_2(AO) + n \right] + O(t^2).
\end{eqnarray*}

By Lemma \ref{Ocond1}, $-2 W \ee[W'|O]$ is equal to
\[ -2W^2 + t (n-1)p_{1,1}(AO) + O(t^2).\]
Thus \[ \ee[(W')^2|O] - 2W \ee[W'|O] + W^2 = t[n - p_2(AO)] + O(t^2).\]
\end{proof}

Lemma \ref{Ovar} bounds the variance of $p_2(AO)$.

\begin{lemma} \label{Ovar} Suppose that $n \geq 4$. Then $Var[p_2(AO)] \leq 2$.
\end{lemma}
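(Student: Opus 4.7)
The plan is to compute $\ee[p_2(AO)]$ and $\ee[p_2(AO)^2]$ directly via the symmetric function tools from \ref{zonal}, then simplify to an explicit rational function in $n$ and $T := \sum_i a_i^4$, and close via an elementary inequality.

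First I would expand $p_{(2)}$ and $p_{(2,2)} = p_2^2$ in the Schur basis using Lemma \ref{express}, and integrate term by term using Lemma \ref{rains1}, which kills every $s_\lambda$ with $\lambda$ not of the form $2\kappa$. From $p_{(2)} = s_{(2)} - s_{(1,1)}$ only the $s_{(2)} = s_{2 \cdot (1)}$ term survives, giving $\ee[p_2(AO)] = Z_{(1)}(a_1^2,\ldots,a_n^2)/Z_{(1)}(1,\ldots,1) = n/n = 1$. For $p_{(2,2)}$, the only partitions $\lambda \vdash 4$ of the form $2\kappa$ are $(4) = 2\cdot(2)$ and $(2,2) = 2\cdot(1,1)$, and the standard $S_4$ character values are $\chi^{(4)}_{(2,2)} = 1$ and $\chi^{(2,2)}_{(2,2)} = 2$, yielding
\[
\ee[p_{(2,2)}(AO)] = \frac{Z_{(2)}(a_1^2,\ldots,a_n^2)}{Z_{(2)}(1,\ldots,1)} + 2 \cdot \frac{Z_{(1,1)}(a_1^2,\ldots,a_n^2)}{Z_{(1,1)}(1,\ldots,1)}.
\]

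I would then invoke the explicit power-sum expansions $Z_{(2)} = p_1^2 + 2 p_2$ and $Z_{(1,1)} = p_1^2 - p_2$, which can be extracted from Chapter VII of \cite{Mac}; combined with $Z_{(2)}(1,\ldots,1) = n(n+2)$ and $Z_{(1,1)}(1,\ldots,1) = n(n-1)$ (from the evaluation formula used in \ref{zonal}) and with the substitution $p_1(a_1^2,\ldots,a_n^2) = n$ coming from the hypothesis $\mathrm{Tr}(AA^t) = n$, a routine algebraic simplification produces
\[
\mathrm{Var}[p_2(AO)] = \ee[p_{(2,2)}(AO)] - 1 = \frac{2(n^3 + n^2 + n - 3T)}{n(n-1)(n+2)}.
\]
To finish, the Cauchy-Schwarz (equivalently, power mean) inequality applied to $(a_1^2,\ldots,a_n^2)$ gives $T = \sum_i a_i^4 \geq (\sum_i a_i^2)^2/n = n$, so the numerator is bounded above by $2(n^3 + n^2 - 2n) = 2n(n-1)(n+2)$ and the desired bound $\mathrm{Var}[p_2(AO)] \leq 2$ follows. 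The only nontrivial step is locating the two power-sum identities for the degree-$2$ zonal polynomials in \cite{Mac}; once these are in hand, the argument reduces to character bookkeeping plus one application of Cauchy-Schwarz, and the resulting bound is sharp, with equality at $T = n$ (e.g.\ $A = I$).
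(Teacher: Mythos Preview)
Your proof is correct and follows essentially the same route as the paper: both compute $\ee[p_2(AO)]$ and $\ee[p_{2,2}(AO)]$ via Lemmas \ref{express} and \ref{rains1}, reduce to the two zonal ratios $Z_{(2)}(a^2)/Z_{(2)}(1^n)$ and $Z_{(1,1)}(a^2)/Z_{(1,1)}(1^n)$, and then maximize in $T=\sum a_i^4$. The only differences are cosmetic: the paper quotes the zonal ratios directly from pages 382--383 of \cite{Mac} rather than via the power-sum expansions $Z_{(2)}=p_1^2+2p_2$, $Z_{(1,1)}=p_1^2-p_2$, and it invokes Lagrange multipliers to see that $T$ is minimized at $T=n$, whereas your Cauchy--Schwarz argument is a cleaner way to get the same inequality $T\geq n$.
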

\
\begin{proof} By definition, $Var[p_2(AO)]=\ee [p_{2,2}(AO)] - (\ee [p_2(AO)])^2$.

From Lemma \ref{express}, \[ p_2(AO) = -s_{(1,1)}(AO) + s_2(AO).\] Then Lemma \ref{rains1}
gives that $\int_{O(n,\mathbb{R})} -s_{(1,1)}(AO) dO= 0$ and (combined with page 410 of \cite{Mac}) that
\[ \int_{O(n,\mathbb{R})} s_2(AO) dO = \frac{Z_{1}(a_1^2,\cdots,a_n^2)}{Z_{1}(1,\cdots,1)} = \frac{a_1^2+\cdots+a_n^2}{n} = 1.\]
Thus $\ee [p_2(AO)] = 1$.

To compute $ \ee [p_{2,2}(AO)]$, Lemma \ref{express} gives that \[ p_{2,2}(AO) = \chi^{4}_{(2,2)} s_4 + \chi^{(3,1)}_{(2,2)} s_{(3,1)} + \chi^{(2,2)}_{(2,2)} s_{(2,2)}
+ \chi^{(2,1,1)}_{(2,2)} s_{(2,1,1)} + \chi^{(1,1,1,1)}_{(2,2)} s_{(1,1,1,1)} .\] Lemma \ref{rains1} and the
character table of $S_4$ then give that

\begin{eqnarray*}
\ee [p_{(2,2)}(AO)] & = &  \ee [\chi^{4}_{(2,2)} s_4(AO) + \chi^{(2,2)}_{(2,2)} s_{(2,2)}(AO)] \\
& = & \ee [s_4(AO) + 2 s_{(2,2)}(AO)] \\
& = & \frac{Z_2(a_1^2,\cdots,a_n^2)}{Z_2(1,\cdots,1)} + 2 \frac{Z_{(1,1)}(a_1^2,\cdots,a_n^2)}{Z_{(1,1)}(1,\cdots,1)}.
\end{eqnarray*} From pages 382 and 383 of \cite{Mac}, it follows that
\[ \frac{Z_2(a_1^2,\cdots,a_n^2)}{Z_2(1,\cdots,1)} = \frac{n^2 + 2 (a_1^4+\cdots+a_n^4)}{n^2+2n} \] and that
\[ \frac{Z_{(1,1)}(a_1^2,\cdots,a_n^2)}{Z_{(1,1)}(1,\cdots,1)} = \frac{n^2-(a_1^4+\cdots+a_n^4)}{n^2-n}.\]

Thus \[ Var[p_2(AO)] = \left[ \frac{n^2}{n^2+2n} + \frac{2n^2}{n^2-n} -1 \right] - (a_1^4+\cdots+a_n^4) \left[ \frac{2}{n^2-n} - \frac{2}{n^2+2n} \right] \] By the method of Lagrange multipliers, $a_1^4+\cdots+a_n^4$ is minimized subject to the constraint $a_1^2+\cdots+a_n^2=n$ when $a_1=\cdots=a_n=1$. But \[ \left[ \frac{n^2}{n^2+2n} + \frac{2n^2}{n^2-n} -1 \right] - n\left[ \frac{2}{n^2-n} - \frac{2}{n^2+2n} \right] = 2,\] implying that $Var[p_2(AO)] \leq 2$, as claimed.
\end{proof}

Next we compute expected values of low order moments of $W'-W$.

\begin{lemma} \label{Olow} Suppose that $n \geq 4$. Then
\begin{enumerate}
\item $\ee(W'-W)^2 = t (n-1) + O(t^2)$.
\item $\ee(W'-W)^4 = O(t^2)$.
\item $\ee|W'-W|^3 = O(t^{3/2})$.
\end{enumerate}
\end{lemma}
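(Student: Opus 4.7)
The plan is to dispatch the three items in order.

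For (1), I would take unconditional expectations in Lemma \ref{Ocond2}. Since $\ee p_2(AO)=1$ (a computation already carried out in the proof of Lemma \ref{Ovar} via $\int s_2(AO)\,dO=1$ and $\int s_{(1,1)}(AO)\,dO=0$), integrating $\ee[(W'-W)^2|O]=t[n-p_2(AO)]+O(t^2)$ over Haar measure immediately yields $\ee(W'-W)^2 = t(n-1)+O(t^2)$.

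For (2), the key observation is the semigroup identity $\ee[(W')^k\mid O]=e^{t\Delta}(W^k)(O)$, which follows from part 3 of Lemma \ref{spectral} applied to the smooth function $\phi=W^k$. Expanding by the binomial theorem and writing $e^{t\Delta}=I+t\Delta+O(t^2)$,
\begin{align*}
\ee[(W'-W)^4\mid O]
  &= \sum_{k=0}^{4}\binom{4}{k}(-W)^{4-k}\,e^{t\Delta}(W^k)(O) \\
  &= \sum_{k=0}^{4}\binom{4}{k}(-W)^{4-k}W^k
     + t\sum_{k=0}^{4}\binom{4}{k}(-W)^{4-k}\Delta(W^k)+O(t^2).
\end{align*}
The order $t^0$ term is $(W-W)^4=0$. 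For the order $t^1$ term I would apply the Leibniz rule for the Laplace--Beltrami operator $\Delta(fg)=g\Delta f+f\Delta g+2\Gamma(f,g)$ (where $\Gamma$ is the carré du champ), which by induction gives $\Delta(W^k)=kW^{k-1}\Delta W + k(k-1)W^{k-2}\Gamma(W,W)$. Substituting and factoring,
\begin{align*}
\sum_{k=0}^{4}\binom{4}{k}(-W)^{4-k}\Delta(W^k)
 &= \Delta W\cdot\sum_{k=0}^{4}\binom{4}{k}(-W)^{4-k}kW^{k-1} \\
 &\quad + \Gamma(W,W)\cdot\sum_{k=0}^{4}\binom{4}{k}(-W)^{4-k}k(k-1)W^{k-2}.
\end{align*}
The two inner sums are the first and second derivatives of $(x+y)^4$ in $x$ evaluated at $x=W,\,y=-W$, namely $4(W-W)^3$ and $12(W-W)^2$, both zero. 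Therefore $\ee[(W'-W)^4\mid O]=O(t^2)$ uniformly in $O$, since $W^j$ for $j\le 4$ is bounded on the compact group $O(n,\mathbb{R})$ and $\Delta^m(W^j)$ lies in the finite-dimensional space of matrix coefficients of bounded degree, which controls the Taylor tail of $e^{t\Delta}$. Taking expectations gives (2).

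For (3), Cauchy--Schwarz gives
\[
\ee|W'-W|^3 = \ee\bigl[|W'-W|\cdot(W'-W)^2\bigr]
  \le \bigl(\ee(W'-W)^2\bigr)^{1/2}\bigl(\ee(W'-W)^4\bigr)^{1/2} = O(t^{3/2}),
\]
combining (1) and (2).

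The main obstacle is part (2): verifying that the order-$t$ cancellation in the Leibniz expansion actually occurs and that the $O(t^2)$ remainder is uniform in $O$. Once the Leibniz identity for $\Delta(W^k)$ is in hand, the cancellation is a purely combinatorial identity (derivatives of $(x+y)^4$ at $y=-x$), and the uniformity follows from the compactness of $O(n,\mathbb{R})$ together with the fact that $\Delta$ preserves polynomial degree in the matrix coefficients.
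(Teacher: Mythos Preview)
Your proof is correct. Parts (1) and (3) coincide with the paper's argument essentially verbatim.

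For part (2) you take a genuinely different route. The paper first uses exchangeability to rewrite $\ee(W'-W)^4 = 2\ee W^4 - 8\ee[W^3\,\ee(W'|O)] + 6\ee[W^2\,\ee((W')^2|O)]$, so that only the already-computed conditional moments from Lemmas \ref{Ocond1} and \ref{Ocond2} are needed; this yields
\[
\ee(W'-W)^4 = t\bigl[-2(n-1)\,\ee p_{1,1,1,1}(AO) - 6\,\ee p_{2,1,1}(AO) + 6n\bigr] + O(t^2),
\]
and the coefficient of $t$ is then shown to vanish by explicitly evaluating $\ee p_{1,1,1,1}(AO)$ and $\ee p_{2,1,1}(AO)$ via Lemmas \ref{express}, \ref{rains1} and the zonal-polynomial formulas from the proof of Lemma \ref{Ovar}. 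Your Leibniz/carr\'e-du-champ argument instead shows that the order-$t$ term in $\ee[(W'-W)^4\mid O]$ vanishes identically, by the combinatorial identity $\partial_x^j(x+y)^4|_{y=-x}=0$ for $j\le 2$. This uses nothing specific to the orthogonal group beyond $\Delta$ being a sum of squares of derivations, so it is more general, and it gives the stronger conditional statement $\ee[(W'-W)^4\mid O]=O(t^2)$. The paper's approach, on the other hand, stays entirely within the toolkit already set up (no new operator like $\Gamma$ is introduced), at the price of a page of symmetric-function computation whose eventual cancellation looks somewhat miraculous.
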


\begin{proof} Lemma \ref{Ocond2} implies that $\ee(W'-W)^2 = t \ee \left[ n-p_2(AO) \right] + O(t^2)$. By the proof
of Lemma \ref{Ovar}, $\ee[p_2(AO)] = 1$. Thus \[ \ee(W'-W)^2 = t(n-1) + O(t^2),\] as claimed.

For part 2, first note that since
\[ \ee[(W'-W)^4] = \ee(W^4) - 4 \ee(W^3W') + 6 \ee[W^2(W')^2] - 4 \ee[W (W')^3] + \ee[(W')^4],\]
exchangeability of $(W,W')$ gives that
\begin{eqnarray*}
\ee(W'-W)^4 & = & 2 \ee(W^4) -8 \ee(W^3W') + 6 \ee[W^2(W')^2] \\
& = & 2 \ee(W^4) -8 \ee[W^3 \ee[W'|O]] + 6 \ee[W^2 \ee[(W')^2|O]].
\end{eqnarray*}

By Lemma \ref{Ocond1},
\[ \ee[W'|O] = \left( 1 - \frac{t(n-1)}{2} \right) W + O(t^2), \] and by the proof of
Lemma \ref{Ocond2},
\[ \ee[(W')^2|O] = W^2 + t \left[-(n-1) p_{1,1}(AO) - p_2(AO) + n \right] + O(t^2).\]
Thus
\begin{eqnarray*}
& & \ee(W'-W)^4 \\
& = & 2 \ee(W^4) - 8 \ee(W^4) + 6 \ee(W^4) \\
& & + t \left[ 4(n-1) \ee(W^4) + 6 \ee[W^2 [-(n-1)p_{1,1}(AO)-p_2(AO)+n]] \right] \\ & & + O(t^2) \\
& = & t \left[ - 2(n-1) \ee[p_{1,1,1,1}(AO)] - 6 \ee[p_{2,1,1}(AO)] +6n \right] + O(t^2),
\end{eqnarray*} where we used that $\ee(W^2)=1$.

From Lemma \ref{express}, $p_{1,1,1,1}(AO)$ is equal to
\[ \chi^{4}_{(1,1,1,1)} s_4 + \chi^{(3,1)}_{(1,1,1,1)} s_{(3,1)} + \chi^{(2,2)}_{(1,1,1,1)} s_{(2,2)}
+ \chi^{(2,1,1)}_{(1,1,1,1)} s_{(2,1,1)} + \chi^{(1,1,1,1)}_{(1,1,1,1)} s_{(1,1,1,1)} . \] From Lemma \ref{rains1} and the fact
that $\chi^{4}_{(1,1,1,1)}=1$ and $\chi^{(2,2)}_{(1,1,1,1)}=2$, it follows that
\begin{eqnarray*}
\ee[p_{1,1,1,1}(AO)] & = & \ee[s_4(AO) + 2 s_{2,2}(AO)] \\
& = & \frac{Z_2(a_1^2,\cdots,a_n^2)}{Z_2(1,\cdots,1)} + 2 \frac{Z_{1,1}(a_1^2,\cdots,a_n^2)}{Z_{1,1}(1,\cdots,1)},
\end{eqnarray*} where $Z_{\lambda}$ denotes the zonal polynomial with parameter two.

Similarly, $p_{2,1,1}(AO)$ is equal to
\[ \chi^{4}_{(2,1,1)} s_4 + \chi^{(3,1)}_{(2,1,1)} s_{(3,1)} + \chi^{(2,2)}_{(2,1,1)} s_{(2,2)}
+ \chi^{(2,1,1)}_{(2,1,1)} s_{(2,1,1)} + \chi^{(1,1,1,1)}_{(2,1,1)} s_{(1,1,1,1)} . \] From Lemma \ref{rains1} and the fact
that $\chi^{4}_{(2,1,1)}=1$ and $\chi^{(2,2)}_{(2,1,1)}=0$, it follows that
\[ \ee[p_{2,1,1}(AO)] = \ee[s_4(AO)] = \frac{Z_2(a_1^2,\cdots,a_n^2)}{Z_2(1,\cdots,1)} ,\] with $Z_{\lambda}$ the zonal polynomial with parameter 2.

As in the proof of Lemma \ref{Ovar}, one has that
\[ \frac{Z_2(a_1^2,\cdots,a_n^2)}{Z_2(1,\cdots,1)} = \frac{n^2 + 2 (a_1^4+\cdots+a_n^4)}{n^2+2n} \] and that
\[ \frac{Z_{(1,1)}(a_1^2,\cdots,a_n^2)}{Z_{(1,1)}(1,\cdots,1)} = \frac{n^2-(a_1^4+\cdots+a_n^4)}{n^2-n}.\]
Plugging in these values, one obtains that
\begin{eqnarray*}
& & \ee(W'-W)^4 \\
& = & -2(n-1) t \left[ \frac{n^2+2(a_1^4+\cdots+a_n^4)}{n^2+2n} + \frac{2(n^2-(a_1^4+\cdots+a_n^4))}{n^2-n} \right] \\
&  & -6t \left[ \frac{n^2+2(a_1^4+\cdots+a_n^4)}{n^2+2n} \right]  + 6nt + O(t^2) \\
& = & O(t^2), \end{eqnarray*} proving part 2 of the theorem.

For part 3 of the theorem, one uses the Cauchy-Schwarz inequality to obtain that
\[ \ee|W'-W|^3 \leq \sqrt{\ee(W'-W)^2 \ee(W'-W)^4}.\] Part 3 then follows from parts 1 and 2 of the theorem.
\end{proof}

\begin{lemma} \label{niceh} Let $f$ be a twice differentiable function with bounded second derivative. Then
\[ \ee[f'(W)-Wf(W)] = \ee \left[ \frac{p_2(AO)-1}{n-1} f'(W) \right]. \]
\end{lemma}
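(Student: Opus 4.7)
The approach is the classical exchangeable-pair technique of Stein's method, applied to the heat-kernel pair $(W, W')$. Since the heat kernel is reversible with respect to Haar measure, $(O, O')$ is exchangeable and hence so is $(W, W') = (\mathrm{Tr}(AO), \mathrm{Tr}(AO'))$. The function $(w, w') \mapsto (w'-w)(f(w) + f(w'))$ is antisymmetric, so its expectation vanishes, giving
\[
2\ee[(W'-W) f(W)] = -\ee[(W'-W)(f(W') - f(W))].
\]
I would evaluate both sides to first order in $t$, then divide by $t$ and pass to $t \to 0$; the $t$-dependent errors vanish while the left-hand side does not depend on $t$, pinning down the identity exactly.

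For the left side, conditioning on $O$ and using Lemma \ref{Ocond1} gives
\[
\ee[(W'-W)f(W)] = \ee\bigl[f(W) (\ee[W'|O] - W)\bigr] = -\tfrac{t(n-1)}{2}\, \ee[Wf(W)] + O(t^2),
\]
uniformly in $O$, since $W = \mathrm{Tr}(AO)$ is bounded a.s.\ by $n$ (Cauchy--Schwarz) and hence $f(W)$ is bounded. For the right side, I would expand
\[
f(W') - f(W) = (W'-W) f'(W) + \tfrac12 (W'-W)^2 f''(\xi),
\]
multiply by $W'-W$, and take expectations. By Lemma \ref{Ocond2},
\[
\ee[(W'-W)^2 f'(W)] = \ee\bigl[f'(W)\ee[(W'-W)^2|O]\bigr] = t\,\ee[f'(W)(n - p_2(AO))] + O(t^2),
\]
while the cubic remainder is bounded in absolute value by $\tfrac12 \|f''\|_\infty\, \ee|W'-W|^3 = O(t^{3/2})$ by Lemma \ref{Olow}(3).

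Equating the two displays, dividing by $t$, and rearranging yields
\[
(n-1) \ee[Wf(W)] = \ee[f'(W)\,(n - p_2(AO))] + O(t^{1/2}).
\]
Letting $t \to 0$ removes the error term, and rewriting $\frac{n - p_2(AO)}{n-1} = 1 + \frac{1 - p_2(AO)}{n-1}$ produces the claimed identity. The point that needs care is that the $O(t^2)$ remainders coming from Lemmas \ref{Ocond1} and \ref{Ocond2} must be uniform in $O$ so that, after multiplication by the bounded factors $f(W)$ and $f'(W)$, they remain $O(t^2)$ in expectation and are killed by the division by $t$; this uniformity is transparent from the spectral expansion of the heat semigroup acting on the fixed polynomial expressions $\mathrm{Tr}(AO)$ and $\mathrm{Tr}(AO)^2$, which lie in finite-dimensional invariant subspaces.
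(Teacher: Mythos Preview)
Your argument is correct and is essentially the same as the paper's proof: both start from the exchangeability identity $\ee[(W'-W)(f(W)+f(W'))]=0$, expand each piece to first order in $t$ using Lemmas \ref{Ocond1}, \ref{Ocond2} and \ref{Olow}(3), divide by $t$, and let $t\to 0$. Your explicit remark about the uniformity in $O$ of the $O(t^2)$ remainders (via the finite-dimensional invariant subspaces) is a point the paper leaves implicit.
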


\begin{proof} Since $(W,W')$ is an exchangeable pair,
\begin{eqnarray*}
0 & = & \ee[(W'-W)[f(W)+f(W')]] \\
& = & \ee[(W'-W)[2f(W)+[f(W')-f(W)]]] \\
& = & 2 \ee[(W'-W)f(W)] + \ee[(W'-W)(f(W')-f(W))].
\end{eqnarray*}

Now by Lemma \ref{Ocond1},
\begin{eqnarray*}
2 \ee [(W'-W)f(W)] & = & 2 \ee[ f(W) \ee[(W'-W)|O]] \\
& = & -t (n-1) \ee[ f(W) W] + O(t^2).
\end{eqnarray*}

By Taylor's theorem and Lemma \ref{Ocond2}, it follows that
\begin{eqnarray*}
\ee[(W'-W)(f(W')-f(W))] & = & \ee[(W'-W) [f'(W) (W'-W)] + R] \\
& = & \ee[ f'(W) \ee[(W'-W)^2|O] + R] \\
& = & \ee [ t f'(W) [ n - p_2(AO)] +R] + O(t^2), \\
\end{eqnarray*}
with $|R| \leq \frac{||f^{''}||}{2} |W'-W|^3$, where $||\cdot||$ denotes the supremum norm.
From part 3 of Lemma \ref{Olow}, it follows that $R=O(t^{3/2})$.

Summarizing, we have that
\begin{eqnarray*}
0 & = & 2 \ee[(W'-W)f(W)] + \ee[(W'-W)(f(W')-f(W))] \\
& = & -t (n-1) \ee[ f(W) W] + t \ee [ f'(W) [ n - p_2(AO)]] + O(t^{3/2}).
\end{eqnarray*} Dividing both sides of this equation by $t$ and then
letting $t \rightarrow 0$ completes the proof.
\end{proof}

Now we prove our main result.

\begin{theorem} The total variation distance between $W$ and a standard
normal is at most $\frac{2 \sqrt{2}}{n-1}$.
\end{theorem}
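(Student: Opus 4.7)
The plan is a direct application of Stein's method: combine the Stein identity from Lemma \ref{niceh} with the standard bounds on the solution to the Stein equation for indicator test functions, and then control the error using the variance bound of Lemma \ref{Ovar}.

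Concretely, given a Borel set $A \subseteq \mathbb{R}$, I would let $f_A$ be the solution of the Stein equation $f'(w) - w f(w) = \mathbf{1}_A(w) - \pp(Z \in A)$, where $Z$ is a standard normal. By the classical bounds due to Stein, $\|f_A'\|_\infty \leq 2$ for any Borel set $A$. The identity of Lemma \ref{niceh}, applied at $f = f_A$, would then yield
$$\pp(W \in A) - \pp(Z \in A) = \ee\!\left[\frac{p_2(AO) - 1}{n - 1}\, f_A'(W)\right].$$
Taking absolute values, bounding $|f_A'|$ by $2$, and using $\ee[p_2(AO)] = 1$ (shown in the proof of Lemma \ref{Ovar}) together with the Cauchy--Schwarz inequality and Lemma \ref{Ovar},
$$\bigl|\pp(W \in A) - \pp(Z \in A)\bigr| \leq \frac{2}{n-1}\, \ee|p_2(AO) - 1| \leq \frac{2}{n-1}\sqrt{\mathrm{Var}[p_2(AO)]} \leq \frac{2\sqrt{2}}{n - 1}.$$
Taking the supremum over Borel sets $A$ delivers the claimed total variation bound.

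The main technical subtlety is that $f_A$ is not twice differentiable, since its derivative typically has jumps at the boundary of $A$, so Lemma \ref{niceh} does not apply to $f_A$ verbatim. I would handle this in the standard way: approximate $\mathbf{1}_A$ uniformly by smooth functions $h_\varepsilon$ with $\|h_\varepsilon\|_\infty \leq 1$; the associated Stein solutions $f_{h_\varepsilon}$ are smooth with uniformly bounded second derivative and still satisfy $\|f_{h_\varepsilon}'\|_\infty \leq 2$. Lemma \ref{niceh} applies to each $f_{h_\varepsilon}$, and passing to the limit $\varepsilon \to 0$ recovers the identity used above. This smoothing is the only non-routine step; beyond it, the argument is simply an assembly of the pieces already prepared. (For small $n$ where $\frac{2\sqrt{2}}{n-1} \geq 1$, or $n < 4$ where Lemma \ref{Ovar} is not established, the bound is either trivial or handled separately.)
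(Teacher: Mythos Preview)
Your approach is essentially the paper's: apply Lemma~\ref{niceh} to the Stein solution, bound $\|f'\|_\infty$ by $2$, and control $\ee|p_2(AO)-1|$ via Cauchy--Schwarz and Lemma~\ref{Ovar}. The one slip is in the smoothing step you flag at the end: an indicator $\mathbf{1}_A$ \emph{cannot} be approximated uniformly by continuous (let alone smooth) functions, so the limit you sketch does not go through as written. The paper handles this by observing at the outset that $d_{TV}(W,Z)$ equals the supremum of $|\ee h(W)-\ee h(Z)|$ over continuous $h$ with $0\le h\le 1$ and compact support (legitimate here since $Z$ has a density), and then approximating each such $h$ uniformly by $C^\infty$ functions $h_m$, which is routine; Lemma~\ref{niceh} is applied to the Stein solution for $h_m$ and one lets $m\to\infty$. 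With that adjustment your argument is complete and coincides with the paper's.
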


\begin{proof} Note that the total variation distance between two random
variables $W$ and $Z$ can be described as

\[ d_{TV}(W,Z) = sup_h | \ee h(W) - \ee h(Z) |, \] where the supremum ranges over all h that are continuous and bounded
from below by $0$ and from above by $1$ and have compact support.

Any such function $h$ can be approximated by $C^{\infty}$ functions in the
supremum norm. For fixed $h$ let $h_m$ be a sequence of $C^\infty$
functions such that
$|| h - h_m || \rightarrow 0$ as $m \rightarrow \infty$
where $||\cdot||$ denotes the supremum norm.

Let $f_m$ be the solution to the Stein equation
\[ f_m'(x) - xf_m(x) = h_m(x) -\ee h_m(Z),\] where $Z$ has the standard normal
distribution. Since we approximate $h$ by a compactly supported $C^{\infty}$ function, Formula (47) on
page 25 of \cite{Stn} allows us to assume that $||f_m^{''}||$ is bounded.
Applying Lemma \ref{niceh} (so taking $T(AO) = \frac{p_2(AO)-1}{n-1}$ in the
formulas below), we have

\begin{eqnarray*}
& & | \ee h(W) - \ee h(Z) | \\
   &  \leq & 2 || h - h_m || + | \ee h_m(W) - \ee h_m(Z) | \\
   &  \leq & 2  || h - h_m || + | \ee [ f'_m(W) - W f_m(W) ] | \\
   &  \leq & 2 || h - h_m || + \sqrt{Var(T)} || f'_m || \\
   &  \leq & 2 || h - h_m || + 2 \sqrt{Var(T)} || h_m - \ee h_m(Z) || \\
   &  \leq & (2+ 4 \sqrt{Var(T)}) || h - h_m || + 2 \sqrt{Var(T)} || h - \ee h(Z) ||.
\end{eqnarray*} (The inequality $|| f'_m || \leq 2 || h_m - \ee h_m(Z) ||$ is Formula (46)
on page 25 of Stein \cite{Stn}).

Letting $m \rightarrow \infty$ we therefore have
\[ | \ee h(W) - \ee h(Z) | \leq 2 \sqrt{Var(T)} || h  - \ee h(Z) || \]
for any continuous function h. As $|| h - \ee h(Z) || \leq 1$ and $Var(T) \leq \frac{2}{(n-1)^2}$
by Lemma \ref{Ovar}, the total variation bound now follows. \end{proof}

\section*{Acknowledgements} Fulman was partially supported by NSF grant DMS 0802082. We thank Thierry L\'{e}vy for
help computing with the orthogonal group Laplacian.

\end{document}